\theoremstyle{remark}{
\newtheorem{Def}{{\rm Definition}}
\newtheorem{Ex}{{\rm Example}}
\newtheorem{Rem}{{\rm Remark}}
\newtheorem{Prob}{{\rm Problem}}
\newtheorem*{MainProb}{Main Problem}
}
\newtheorem{Prop}{Proposition}
\newtheorem{Thm}{Theorem}
\newtheorem{Lem}{Lemma}
\newtheorem{Fact}{Fact}
\begin{document}
\title[On homology classes of closed submanifolds in compact manifolds]{Realizing a homology class of a compact manifold by a homology class of an explicit closed submanifold--a new approach to Thom's works on homology classes of submanifolds--}
\author{Naoki Kitazawa}
\keywords{Differential topology. Submanifolds. Singularities of differentiable maps.}
\subjclass[2010]{Primary~57N15. Secondary~57R45}
\address{Institute of Mathematics for Industry, Kyushu University, 744 Motooka, Nishi-ku Fukuoka 819-0395, Japan\\
 TEL (Office): +81-92-802-4402 \\
 FAX (Office): +81-92-802-4405}
\email{n-kitazawa@imi.kyushu-u.ac.jp}
\maketitle
\begin{abstract}
It is a classical important problem of differential topology by Thom; for a homology class of a compact manifold, can we realize this by a closed (smooth) submanifold with no boundary? This is true if the degree of the class is smaller or equal to the half of the dimension of the outer manifold under the condition that the coefficient ring is $\mathbb{Z}/2\mathbb{Z}$ and that the outer manifold is closed and smooth. If the degree of the class is smaller or equal to $6$ or equal to $k-2$ or $k-1$ under the condition that the coefficient ring is $\mathbb{Z}$ where $k$ is the dimension of the manifold and that the outer manifold is closed, orientable and smooth, then this is also true. As a specific study, for $4$-dimensional closed manifolds, the topologies of closed and connected surfaces realizing given 2nd homology classes have been actively studied, for example. 

In the present paper, we consider the following similar problem; can we realize a homology class of a compact manifold by a homology class of an explicit closed manifold embedded in the (interior of the) given compact manifold? This problem is considered as a variant of previous problems. 
We present an affirmative answer via important theory in the singularity theory of differentiable maps: lifting a given smooth map to an embedding or obtaining an embedding such that the composition of this with the canonical projection is the given map. Presenting this application of lifting smooth maps and related fundamental propositions is also a main purpose of the present paper.

          

\end{abstract}


\maketitle
\section{Introduction and fundamental notation and terminologies.}
\label{sec:1}
  
The following problem, essentially launched by Thom, is a classical important problem in differential topology. We discuss the problem and related problems here in the smooth category. Let $A$ be a module.

\begin{Prob}
For a homology class $c \in H_j(X;A)$ of a closed manifold $X$, can we realize this by a closed submanifold $Y$ with no boundary or can we represent $c$ as $i_{\ast}({\nu}_Y)=c$ for a generator ${\nu}_{Y} \in H_{\dim Y}(Y;A)$ of the module $H_{\dim Y}(Y;A)$ and the inclusion $i:Y \rightarrow X$?
\end{Prob} 
This is true if the degree of the class is smaller or equal to the half of the dimension of the outer manifold under the condition that the
 coefficient $A$ is the ring $\mathbb{Z}/2\mathbb{Z}$. If the degree of the class is smaller or equal to 6 or equal to $\dim X-2$ or $\dim X-1$ under the conditions that the outer manifold $X$ is orientable and that the coefficient is the ring $\mathbb{Z}$, then this is also true. See \cite{thom} and see also \cite{suzuki} for related classical studies. Furthermore, \cite{bohrhankekotschick} is on classes we cannot represent in this way.
\begin{Prob}
For a 2nd homology class of a $4$-dimensional closed manifold, how about the orientability and the genus of a closed and connected surface realizing this?
\end{Prob} 
This gives various, explicit, important and interesting problems. See \cite{kronheimer} for example. In low dimensional differential topology, these kinds of problems are actively studied via technique on low dimensional topology, gauge theory, and so on.
\subsection{Problems studied in this paper related to these problems by Thom}

In the present paper, related to the problems before, we consider the following problem.
\begin{MainProb}
For a homology class $c \in H_j(X;A)$ of a compact manifold $X$, can we realize the homology class by a homology class of a closed manifold $Y$ satisfying $\dim Y \geq j$ and $\partial Y=\emptyset$ embedded in the (interior of the) manifold or can we represent $c$ as $i_{\ast}(c^{\prime})=c$ for
 a class $c^{\prime} \in H_{j}(Y;A)$ of the module $H_{j}(Y;A)$ which may not be of degree $\dim Y$ and the inclusion $i:Y \rightarrow X$? Moreover, can we obtain $Y$ in a constructive way?
\end{MainProb} 
Note that we consider manifolds of arbitrary dimensions in the present paper. 
\subsection{Fold maps and Reeb spaces}
\subsubsection{Fold maps}
We introduce terminologies on differentiable maps. A {\it singular} point of a differentiable map $c:X \rightarrow Y$ is a point at which the rank of the differential of the map drops or the image of the differential is smaller than both the dimensions of $X$ and $Y$. A {\it singular value} of the map is a point realized as a value at a singular point of the map. The set $S(c)$ of all singular points is the {\it singular set} of the map. The {\it singular value set} is the image $c(S(c))$ of the singular set. The {\it regular value set} of the map is the complementary set $Y-c(S(c))$ of the singular value set and a {\it regular value} is a point in the regular value set.

Throughout the present paper, manifolds are assumed to be smooth or differentiable and of $C^{\infty}$ and so are maps between manifolds. A diffeomorphism on a manifold is always smooth and the {\it diffeomorphism group} of it is defined as the group of all diffeomorphisms on it.
\begin{Def}
\label{def:1}
Let $m \geq n \geq 1$ be integers. A smooth map from an $m$-dimensional smooth manifold with no boundary into an $n$-dimensional smooth manifold with no boundary is said to be a {\it fold} map if at each singular point $p$, the form is $(x_1, \cdots, x_m)\mapsto (x_1, \cdots, x_{n-1}, {\Sigma}_{k=n}^{m-i} {x_k}^2-{\Sigma}_{k=m-i+1}^{m} {x_k}^2)$ for some coordinates and an integer $0 \leq i(p) \leq \frac{m-n+1}{2}$.
\end{Def}
For a fold map in Definition \ref{def:1}, we have the following two properties.
\begin{enumerate}
\item For any singular point $p$, the $i(p)$ is unique {\rm (}$i(p)$ is called the {\it index} of $p${\rm )}.
\item The set consisting of all singular points of a fixed index of the map is a closed submanifold of dimension $n-1$ with no boundary of the domain and the restriction to the singular set is an immersion of codimension $1$.
\end{enumerate}
A fold map is a {\it special generic} map if $i(p)=0$ for all singular points. A Morse function with exactly two singular points on a homotopy sphere and canonical projections of unit spheres are simplest examples of special generic maps.
If $m=n$, then a fold map is always special generic.
\subsubsection{Reeb spaces}
The {\it Reeb space} of a continuous map is defined as the space of all connected components of preimages of the map. 
\begin{Def}
\label{def:2}
 Let $X$ and $Y$ be topological spaces. For a continuous map $c:X \rightarrow Y$, we define a relation on $X$ as $p_1 {\sim}_c p_2$ if and only if $p_1$ and $p_2$ are in
 a same connected component of $c^{-1}(p)$ for some $p \in Y$. Thus ${\sim}_{c}$ is an equivalence relation on $X$ and we denote the quotient space $X/{\sim}_c$ by $W_c$ and call it the {\it Reeb space} of $c$.
\end{Def}

We denote the induced quotient map from $X$ into $W_c$ by $q_c$ and we can define $\bar{c}: W_c \rightarrow Y$ uniquely by $c=\bar{c} \circ q_c$. See also \cite{reeb} for example.
\begin{Prop}[\cite{shiota}]
\label{prop:1}
For a fold map, the Reeb space is a polyhedron whose dimension is equal to that of the target manifold.
\end{Prop}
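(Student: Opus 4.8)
The plan is to reduce the statement to the local normal forms of a fold map $c\colon X\to Y$, read off the local structure of the Reeb space from them, and then globalize; write $n=\dim Y$. Two things must be checked, that $W_c$ is a polyhedron and that $\dim W_c=n$. The estimate $\dim W_c\ge n$ is the easy half: by Sard's theorem the set of regular values is dense, and near a regular point the local submersion theorem lets one take $c$ to be the projection $\mathbb{R}^{m}=\mathbb{R}^{n}\times\mathbb{R}^{m-n}\to\mathbb{R}^{n}$, where $q_c$ is this projection and $W_c$ therefore contains a subset homeomorphic to an open subset of $\mathbb{R}^{n}$. Everything else comes from the behaviour of $c$ near its fold points.

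Fix a singular point $p$ of index $i$ and use the coordinates of Definition \ref{def:1}. There $c$ equals $\mathrm{id}_{\mathbb{R}^{n-1}}\times Q$ with $Q\colon\mathbb{R}^{m-n+1}\to\mathbb{R}$ the nondegenerate quadratic form of signature $(m-n+1-i,\,i)$ appearing in Definition \ref{def:1}. Each fibre of $c$ in this chart is a product $\{a\}\times Q^{-1}(t)$, so two points of the chart are ${\sim}_c$-equivalent precisely when they lie in one common connected component of one level set $Q^{-1}(t)$; hence locally $W_c$ looks like $\mathbb{R}^{n-1}\times W_Q$, where $W_Q$ is the Reeb space of $Q$. (A component of a global fibre may meet a chart in several pieces; this is harmless after the charts are shrunk, and is in any case absorbed by the triangulation theorem used below.)

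One then computes $W_Q$ directly. For $t\neq 0$ the level set $Q^{-1}(t)$ is a smooth quadric that is empty, connected, or has exactly two connected components, the last case occurring exactly when the relevant definite part of $Q$ is one-dimensional ($m-n+1-i=1$ for $t>0$, or $i=1$ for $t<0$); for $t=0$ it is the connected quadric cone through the origin. By Definition \ref{def:2}, $W_Q$ is then the interval of values $t$ with the part over $\{t>0\}$ and/or over $\{t<0\}$ possibly doubled and the fibre over $t=0$ a single point --- concretely a half-line, a line, or a star with three or four edges, in all cases a $1$-dimensional polyhedron (and never $0$-dimensional, $Q$ being a non-trivial form). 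So every point of $W_c$ has a neighbourhood modelled on $\mathbb{R}^{n}$ or on $\mathbb{R}^{n-1}\times W_Q$ with $\dim W_Q=1$; thus $\dim W_c\le n$, and with the first paragraph $\dim W_c=n$. (When $c$ is proper this upper bound is also immediate: $\bar c\colon W_c\to Y$ is then a closed map with finite fibres, and such a map onto an $n$-manifold does not raise dimension.)

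Promoting this local picture to a genuine polyhedral structure on $W_c$ is the step I expect to be the real obstacle, and I would obtain it from Shiota's triangulation theory \cite{shiota}. The singular set of a fold map is a disjoint union of closed submanifolds, one per index, on each of which $c$ restricts to an immersion, so $c$ --- proper in the situations of interest, where $X$ is compact --- is in particular a proper Thom $C^{\infty}$ map, and Shiota's results then yield a triangulation of $W_c$ together with compatible triangulations of $q_c$ and of $\bar c$, the latter refining a prescribed triangulation of $Y$. A more self-contained route would be an induction over the skeleta of a triangulation of $Y$ adapted to the singular value set, using Ehresmann's fibration theorem over the regular simplices (where $c$ restricts to a trivial finite-sheeted bundle) and the local models above over the singular simplices; the delicate point there is how the local star-bundles match up along the --- after a fold-preserving generic perturbation, normal-crossing --- singular value set, which is precisely what Shiota's theorem supplies. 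I would also record that a properness hypothesis on $c$ (automatic for $X$ compact) is what forces the fibres to have finitely many components and makes the triangulation exist, and state the proposition accordingly.
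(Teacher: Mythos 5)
The paper does not actually prove Proposition \ref{prop:1}: it is stated as a citation of \cite{shiota}, where the triangulability of Reeb spaces of (proper) Thom maps is established. Your proposal therefore cannot coincide with ``the paper's proof,'' but it is a reasonable reconstruction: you correctly isolate the genuinely hard step --- that $W_c$ carries a polyhedral structure at all --- and delegate exactly that to Shiota's triangulation theorem, while supplying the dimension count yourself. The count is essentially right: a regular fibre component of a proper fold map has a saturated product neighbourhood whose Reeb quotient is an open $n$-ball, giving $\dim W_c\ge n$; and your parenthetical argument (for $X$ compact, $\bar c\colon W_c\to Y$ is a closed surjection onto its image with finite fibres, hence does not raise covering dimension) gives $\dim W_c\le n$ cleanly. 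Your computation of $W_Q$ for the quadratic normal form of Definition \ref{def:1} is also correct and matches the interval/Y-shaped local fibres of $q_f(S(f))$-neighbourhoods described in Section \ref{sec:2}.

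One caution. The claim that ``every point of $W_c$ has a neighbourhood modelled on $\mathbb{R}^{n}$ or on $\mathbb{R}^{n-1}\times W_Q$'' is not correct as written, and shrinking charts does not repair it: the local topology of $W_c$ at $q_c(p)$ is governed by a \emph{saturated} neighbourhood of the entire fibre component through $p$, which may contain several singular points (and whose image may sit on a self-intersection of the singular value set), so the correct local model is an amalgam of several copies of $\mathbb{R}^{n-1}\times W_Q$ glued along regular parts. This does not harm the dimension estimate --- each such piece is still at most $n$-dimensional, and your Hurewicz-type argument bypasses the issue entirely --- but it does mean that the ``more self-contained route'' sketched in your last paragraph genuinely needs the full strength of \cite{shiota} (or the structure theory of simple stable maps as in \cite{saeki3}) to control how the local star bundles match up over the singular value set; it is not just a technical refinement. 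As stated, the proposition should also be read with the standing hypothesis of the paper that the source manifold is closed, which is what makes $\bar c$ closed with finite fibres; you are right to flag properness explicitly.
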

For suitable classes of these maps, Reeb spaces inherit topological properties of the manifolds admitting the maps. See \cite{kitazawa}, \cite{kitazawa2}, \cite{kitazawa3} and \cite{saekisuzuoka} for example.
\subsection{Organization of the present paper}
This paper concerns Main Problem and also presents new answers to important problems on differentiable maps between manifolds.  The problems are on lifting of maps, which is a key ingredient in an explicit problem related to Main Pronlem. 
The organization of the paper is as the following.

In the next section, we introduce a class of {\it standard-spherical} fold maps. 
It is defined as a fold map such that indices of all singular points are $0$ or $1$, that preimages of regular values are points or disjoint unions of standard spheres: if the codimension of the map $f:M \rightarrow N$ is $n-m=-1$, then assume also that the domain is an orientable manifold. A {\it simple} fold map $f$ is a fold map such that ${q_f}{\mid}_{S(f)}$ is injective.  
Special generic maps form a proper subclass of these classes.

In the third section, we present examples of simple standard-spherical fold maps. After that, we consider lifting these maps to embeddings: in other words we obtain embeddings such that the compositions with the canonical projections are the given maps. This is a fundamental, important and interesting studies in the theory of singularity of differentiable maps and this also gives strong tools in obtaining main results of the present paper. We also present a new answer for a kind of these problems as Theorem \ref{thm:1}: this is also an important ingredient in main results. The last section is devoted to main results or explicit answers to Main Problem via tools and theory in the third section. 
\section{Special generic maps, standard-spherical fold maps and simple fold maps}
\label{sec:2}

Throughout this paper, let $m \geq n \geq 1$ be integers, $M$ be a closed and connected manifold of dimension $m$, $N$ be a manifold of dimension $n$ without boundary and $f:M \rightarrow N$ be a smooth map unless otherwise stated.
In addition, the structure groups of bundles such that the fibers are manifolds are assumed to be
 (subgroups of) diffeomorphism groups unless otherwise stated or these bundles are assumed to be so-called {\it smooth} bundles: {\it PL} bundles are discussed as exceptional cases. A {\it linear} bundle is a smooth bundle whose fiber is a ($k+1$)-dimensional (closed) unit disc or the $k$-dimensional unit sphere in ${\mathbb{R}}^{k+1}$ and whose structure group is a subgroup of the ($k+1$)-dimensional one $O(k+1)$ acting linearly in a canonical way. A {\it PL} bundle is a bundle whose fiber is a polyhedron and whose structure group is a group consisting of PL homeomorphisms of the fiber. 
\begin{Def}
A fold map $f:M \rightarrow N$ is said to be {\it standard-spherical} if the following properties hold (we can easily know the definitions of a {\it crossing} and a {\it normal} crossing of a smooth immersion and we omit the definitions).
\begin{enumerate}
\item The restriction map $f {\mid}_{S(f)}$ is an immersion whose crossings are normal. 
\item Indices of singular points are $0$ or $1$.
\item If $m-n=1$ and there exists a singular point of index $1$ of $f$, then $M$ is orientable.
\item Preimages of regular values are disjoint unions of points or standard spheres.
\end{enumerate}
\end{Def}
\begin{Ex}
A special generic map $f$ is a map of this class. Under the condition $m=n$ its Reeb space $W_f$ is regarded as a manifold diffeomorphic to $M$. Under the condition $m>n$ its Reeb space $W_f$ is an $n$-dimensional compact manifold we can smoothly immerse into $N$. $f$ is represented as the composition of $q_f$ with the immersion and furthermore, $q_f(S(f))=\partial W_f$ and ${q_f} {\mid}_{S(f)}$ is injective.
\end{Ex}
\begin{Def}
We say a fold map $f$ is {\it simple}
 if the restriction of $q_f$ to the singular set is injective. 
\end{Def}
Special generic maps are simple. Let $m>n$. 

Under the condition $m>n$, generally, for a simple fold map $f$, for each connected component $C$ of $q_f(S(f))$, its small regular neighborhood $N(C)$ is represented as a PL bundle over $C$ whose fiber is a closed interval or a $Y$-shaped $1$-dimensional polyhedron and the composition of the restriction of the map $q_f$ to the preimage ${q_f}^{-1}(N(C))$ of the total space $N(C)$ of the bundle with the canonical projection to $C$ is a smooth bundle. 

Under the condition $m>n$, for a simple standard-spherical map $f$, for the complementary set of the union of the regular neighborhoods $N(C)$ for all connected components $C$ of $q_f(S(f))$, the restriction of $q_f$ to the preimage gives a smooth bundle whose fiber is $S^{m-n}$. The fiber
 of the bundle $N(C)$ is Y-shaped and the bundle ${q_f}^{-1}(N(C))$ is a smooth bundle whose fiber is diffeomorphic to a manifold obtained by removing the interior of a disjoint union of three standard closed discs of dimension $m-n$ smoothly and disjointly embedded into $S^{m-n}$ for each connected component $C$ consisting of singular points of index $1$. For this, see also \cite{saeki} for example. For each connected component $C$ consisting of singular points of index $0$, the fiber of the PL bundle is a closed interval and the smooth bundle is a linear bundle and this holds for general fold maps by \cite{saeki2}.
 
Under the condition $m=n$, for each connected component $C$ of the singular set, we can take a closed tubular neighborhood $N(C) \subset M$ and the canonical projection to $C$ gives a linear bundle whose fiber is a closed interval.

\begin{Rem}
In the case of a simple standard-spherical fold map $f$, we do not assume that the restriction map $f {\mid}_{S(f)}$ is an immersion whose crossings are normal. We may assume this but the assumption is not essential.
\end{Rem}

\section{S-trivial standard-spherical fold maps and lifting these maps to embeddings.}
\label{sec:3}
\begin{Def}
\label{def:5}
In section \ref{sec:2}, if in the case $m>n$ the PL bundle $N(C)$ over $C$ and the smooth bundle ${q_f}^{-1}(N(C))$ over $C$ are trivial for each $C$, then the simple fold map $f$ is said
 to be {\it S-trivial}. If in the case $m=n$, the linear bundle $N(C)$ over $C$ is trivial for each $C$, then the fold map $f$ is said to be {\it S-trivial}.
\end{Def}

We present several examples of S-trivial standard-spherical fold maps.

\begin{Ex}
\label{ex:2}
\begin{enumerate}
\label{ex:2.1}
\item A special generic map $f:M \rightarrow N$ between equidimensional manifolds such that the following properties hold.
\begin{enumerate}
\item $M$ is orientable.
\item $S(f)$ is orientable.
\end{enumerate} 
For example, let $M$ be a homotopy sphere for example. \cite{eliashberg} implies that for any homotopy class of continuous maps between $S^m$ and $S^n$ where $m=n$, we can find a fold map satisfying the properties before. 
\item
\label{ex:2.2}
Projections of bundles whose fibers are standard spheres: they are also special generic maps having no singular point.
\item
\label{ex:2.3}
\cite{costantinothurston}, \cite{ishikawakoda} and \cite{saeki3} present S-trivial standard-spherical fold maps. Through them, we can know that 3-dimensional closed and orientable manifolds of a class (the class of {\it graph manifolds}) admit such maps into surfaces. For such a map, we can regard the Reeb space $W_f$ as the {\it shadow} of the manifold. Roughly speaking, a {\it shadow} is a polyhedron at each point of which there exists a small regular neighborhood PL homeomorphic to a regular neighborhood of a point of a Reeb space of a simple fold map into the plane as before. Moreover, the space of all points the regular neighborhoods of which are $2$-dimensional discs containing the points in the interiors is a $2$-dimensional manifold and an integer called a {\it gleam} is assigned to each connected component of this. We will review the notion in Definition \ref{def:6}. See \cite{ishikawakoda} and see also \cite{turaev} and \cite{turaev2}. See also Remark \ref{rem:1}.
\item
\label{ex:2.4}
\cite{kitazawa5} and \cite{kitazawa7} present construction of families of infinitely many such maps and families of infinitely many closed and connected manifolds admitting them starting from fundamental examples. Fundamental examples are, special generic maps and examples in \cite{kitazawa}, \cite{kitazawa2} and \cite{kitazawa4} for example and we obtain new maps via surgery operations to the maps and the manifolds.
\end{enumerate}
\end{Ex}
We introduce {\it shadows}. We omit explanations on the notions of an {\it oriented} linear bundle and its ({\it relative}) {\it Euler class}. 
\begin{Def}
\label{def:6}
Let $f$ be an S-trivial standard-spherical fold map on a 3-dimensional closed, connected and orientable manifold into an orientable surface. Orient the target surface. Define for the smooth bundle ${q_f}^{-1}(N(C))$ over $C$ a trivialization for each $C$. 
$P$ denotes each connected component of the complementary set of the union of suitable small regular neighborhoods $N(C)$ of $C$ for all connected components $C$ of $q_f(S(f))$. 
We can induce the orientation on $P$ from the target surface. We can define and let us define a trivialization for the trivial bundle ${q_f}^{-1}(P)$ over $P$ for each non-closed $P$, which is regarded as an oriented linear bundle which is trivial and whose fiber is diffeomorphic to $S^1$. We can define a relative Euler class $e_P \in H^2(P,\partial P;\mathbb{Z})$ for the oriented linear bundle over each $P$ respecting the trivializations of the bundles. The value of $e_P$ at the fundamental class of $P$, which is the generator of $H_2(P,\partial P;\mathbb{Z})$ respecting the orientation of $P$, is called the {\it gleam}. We can define a {\it shadow induced from} $f$ as the tuple of the Reeb space $W_f$, the orientation of the target surface, the family of all the trivializations of the trivial bundles, and the family of all gleams.
\end{Def}

\begin{Rem}
\label{rem:1}
In Example \ref{ex:2} (\ref{ex:2.3}) and Definition \ref{def:6}, a {\it shadow} is introduced and the definiton is in general a bit different. A {\it shadow} is defined as an object suitable for the content of the present paper here, and a {\it shadow} for the $2$-dimensional polyhedron of a wider class is in fact defined as a pair of the polyhedron and the family of integers or rational numbers of the form $\frac{k}{2}$ where $k$ is an integer assigned to all connected components of a $2$-dimensional manifold obtained as a subpolyhedron of the original polyhedron by removing the interior of a small regular neighborhood of a 1-dimensional subpolyhedron defined as the subset of all non-manifold points in the original 2-dimensional polyhedron. More precisely, a {\it shadow} is for a $3$-dimensional closed, connected and orientable manifold obtained in a natural way from a shadow introduced just before uniquely and there are in general shadows of more than one type for such a $3$-dimensional manifold. The assigned numbers are said to be {\it gleams}. Similarly, from a fold map $f$ such that the restriction map $f {\mid}_{S(f)}$ is an immersion whose crossings are normal on a $3$-dimensional closed, connected and orientable manifold into a surface, which always exists, and the Reeb space $W_f$, we can obtain a {\it shadow} similarly. Giving the rigorous definiton is left to readers or see the references.    
\end{Rem}

The following proposition is a fundamental and key proposition in the present paper. 
${\rm Emd}(X,Y)$ denotes the space of all the smooth embeddings of a manifold $X$ into a manifold $Y$ endowed with the so-called {\it $C^{\infty}$ Whitney topology} (see \cite{golubitskyguillemin}). 
\begin{Fact}[\cite{budney}]
\label{fact:1}
${\rm Emd}(X,Y)$ is $\max\{0,\min \{2\dim
Y-3\dim X-4,\dim Y-dim X-2\}\}$-connected if $\dim Y \geq \dim X+2 \geq 3$ is assumed.
\end{Fact}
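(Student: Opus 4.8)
The plan is to prove that $\mathrm{Emd}(X,Y)$ is $c$-connected, where $m=\dim X$, $n=\dim Y$ and $c=\max\{0,\min\{2n-3m-4,\,n-m-2\}\}$, by the standard strategy for embedding spaces: reduce, via transversality, the vanishing of $\pi_{k}(\mathrm{Emd}(X,Y))$ for $k\le c$ to an \emph{extension problem} for families of slice-wise embeddings, and then solve that problem. A crude general-position solution yields only connectivity $n-2m-2$; the improvement to the sharp value $\min\{2n-3m-4,\,n-m-2\}$ is the point where one must bring in Haefliger's metastable deleted-product method (equivalently, the Goodwillie--Weiss--Klein embedding calculus). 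I take $\mathrm{Emd}(X,Y)\neq\emptyset$ for granted --- this is all the formula asserts once $\min\{2n-3m-4,n-m-2\}<0$, and it holds by the Whitney embedding theorem when the codimension is large --- and fix a basepoint $f_{0}\in\mathrm{Emd}(X,Y)$.

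Represent a class in $\pi_{k}(\mathrm{Emd}(X,Y),f_{0})$, $k\le c$, by a smooth map $\widehat F\colon S^{k}\times X\to Y$ that restricts to an embedding on each slice $\{s\}\times X$ and is slice-independent near a basepoint; the aim is a filling $\widehat G\colon D^{k+1}\times X\to Y$ that is again a slice-wise embedding and is slice-independent near the boundary basepoint disk. Extend $\widehat F$ arbitrarily to a smooth map $D^{k+1}\times X\to Y$. First deform it rel $S^{k}\times X$ to a slice-wise \emph{immersion}: by Smale--Hirsch theory the obstruction lies in the homotopy of the relevant frame bundle, whose fibre is the Stiefel manifold $V_{m}(\mathbb R^{n})$ --- $(n-m-1)$-connected --- so since the source complex has dimension $k+1+m$ the obstruction vanishes once $k\le n-m-2$. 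Then remove the \emph{fibrewise double-point set}, a submanifold of $D^{k+1}\times(X\times X\smallsetminus\Delta_{X})$ of expected dimension $(k+1)+2m-n$; a crude push-off into general position makes it empty when this is negative, i.e.\ for $k\le n-2m-2$. (An alternative route to the intermediate bound $n-m-2$ is a handle induction on $X$: restricting embeddings to a sub-handlebody is, up to the pertinent connectivity, a fibration whose fibre is a space of embeddings of a handle, hence of a disc, rel part of its boundary into the complement of the partial embedding, and this complement, being $Y$ with an $m$-dimensional submanifold deleted, is $(n-m-2)$-connected by general position, which propagates through the induction.)

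To upgrade the double-point step to $\min\{2n-3m-4,\,n-m-2\}$ one replaces the non-equivariant double-point manifold by the $\mathbb Z/2$-equivariant ``difference'' map on the fibrewise deleted product of $X$: up to the metastable range a slice-wise embedding is the same datum as such an equivariant map into a sphere $S^{n-1}$, so the extension problem becomes that of extending an equivariant map, solved by equivariant obstruction theory with obstructions in $H^{\ast+1}_{\mathbb Z/2}(\widetilde X;\pi_{\ast}(S^{n-1}))$. Since $\widetilde X$ has dimension $2m$, $S^{n-1}$ is $(n-2)$-connected, and the metastable identification is valid only up to degree about $n-m-2$, this gives exactly $k\le\min\{2n-3m-4,\,n-m-2\}$. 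Equivalently one may run the Goodwillie--Weiss--Klein embedding calculus: the Taylor tower of $\mathrm{Emd}(X,Y)$ converges in codimension at least three, its first stage $T_{1}\mathrm{Emd}(X,Y)=\mathrm{Imm}(X,Y)$ carries the $n-m-2$ contribution through the frame-bundle obstruction above, and the Goodwillie--Klein connectivity estimate for $\mathrm{Emd}(X,Y)\to T_{2}\mathrm{Emd}(X,Y)$ carries the $2n-3m-4$ contribution, the minimum being what controls $\pi_{\ast}$. Finally, the $k=0$ instance gives path-connectedness whenever $\min\{2n-3m-4,n-m-2\}\ge 0$ --- in the metastable case this is Haefliger's unknotting theorem --- while below that threshold the formula, truncated at $0$, asserts nothing further.

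The step I expect to be the main obstacle is the metastable identification and its bookkeeping: making the statement ``a slice-wise embedding up to the metastable range is the same datum as an equivariant map of fibrewise deleted products'' precise and quantitative, controlling the error of this identification, and then carrying out the equivariant obstruction theory --- with the correct twisted $\mathbb Z/2$-action on $\widetilde X$ and the exact connectivity of the fibrewise deleted product --- so that the constants come out to precisely $2n-3m-4$ and $n-m-2$ rather than weaker values. By comparison the remaining points are routine: the treatment of non-compact $X$ (passing to proper or compactly supported embeddings and checking that the $C^{\infty}$ Whitney topology behaves as expected), of non-simply-connected $X$ (local coefficients in the obstruction groups), and the precise handling of the frame-bundle obstruction in the Smale--Hirsch step.
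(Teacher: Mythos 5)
First, note that the paper does not prove this statement at all: it is imported verbatim as a Fact from the cited reference of Budney, so there is no internal proof to compare yours against. Your outline (Smale--Hirsch for the immersion-theoretic term, Haefliger's metastable deleted-product method or the Goodwillie--Weiss--Klein tower for the $2\dim Y-3\dim X-4$ term) is indeed the standard route by which such connectivity estimates are established in that literature, so at the level of strategy you are pointing at the right machine.

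However, there are genuine gaps. The most serious is that the statement as written, for arbitrary manifolds $X$ and $Y$, is false, and your argument does not detect this: take $X=S^1$ and $Y=T^4$, so that $\dim Y\geq \dim X+2\geq 3$ holds and the formula asserts that $\mathrm{Emb}(S^1,T^4)$ is $1$-connected; but $\pi_0$ of this space surjects onto the set of conjugacy classes in $\pi_1(T^4)=\mathbb{Z}^4$, so it is not even path-connected. The theorem in the cited source (and the only way it is used in this paper, namely for $\mathrm{Emb}(S^{m-n},\mathbb{R}^k)$) concerns spheres or discs mapping into Euclidean space or spheres, and your proposal silently uses that restriction in exactly the two places where a general $Y$ would break it: the claim that the complement of an $m$-dimensional submanifold of $Y$ is $(\dim Y-\dim X-2)$-connected (general position only gives that the inclusion of the complement into $Y$ is highly connected, not that the complement is connected at all), and the immersion step, where $\mathrm{Imm}(X,Y)$ is not highly connected for general $Y$ since even $\mathrm{Map}(X,Y)$ need not be. Relatedly, your bookkeeping in the immersion step is off: a fiber that is $(\dim Y-\dim X-1)$-connected over a source complex of dimension $k+1+\dim X$ gives the vanishing range $k\leq \dim Y-2\dim X-2$, not $k\leq \dim Y-\dim X-2$; the $\dim Y-\dim X-2$ term really comes from the Smale--Hirsch identification $\mathrm{Imm}(S^p,\mathbb{R}^q)\simeq V_p(\mathbb{R}^q)$ available for spheres, not from a cell-by-cell obstruction count over $X$. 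Finally, the step that actually produces the constant $2\dim Y-3\dim X-4$ (Haefliger's parametrized metastable classification, or the Goodwillie--Klein connectivity estimate for the map to the second Taylor stage) is invoked by name rather than proved, as you yourself acknowledge; as it stands the proposal is a correct roadmap to the literature rather than a proof.
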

See also \cite{nishioka} for Fact \ref{fact:1}. The following is for example shown in Lemma 1 of \cite{kitazawa6}.
Hereafter, we call a manifold obtained by removing the interiors of three smoothly and disjointly embedded closed $k$-dimensional discs in a $k$-dimensional standard sphere a {\it pair of pants} of dimension $k$ for $k>1$.  
\begin{Lem}
\label{lem:1}
For a Morse function $\bar{f}$ on a $k$-dimensional compact manifold satisfying $k \geq 1$ and either of the following properties, we can represent this as a composition of an embedding into ${\mathbb{R}}^{k+1}$ with a canonical projection. 
\begin{enumerate}
\item A Morse function $\bar{f}$ is a function on a closed unit disc of dimension $k$ with exactly one singular point such that the singular point is in the interior, that the preimage of one of the two extrema and the boundary coincide, and that the preimage of the remaining extremum is the one-point set consisting of the singular point. In other word, it is a so-called {\it height function}.
\item $k>1$ and a Morse function $\bar{f}$ is a function on a pair of pants of dimension $k$ satisfying the following two.
\begin{enumerate}
\item It is a function with exactly one singular point and the singular point is in the interior.
\item The preimages of the two extrema are a connected component of the boundary and the disjoint union of the remaining two connected components of the boundary, respectively.
\end{enumerate} 
\end{enumerate} 
\end{Lem}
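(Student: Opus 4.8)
The plan is to construct the required embedding explicitly in each of the two cases, with no need for any connectivity input such as Fact~\ref{fact:1}: the essential point is that we are allowed $k$ extra coordinates, and the graph of a smooth function over a submanifold already embedded in $\mathbb{R}^{k}$ is automatically an embedding into $\mathbb{R}^{k+1}$ whose composition with a coordinate projection is that function.

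The first step is to fix a smooth embedding $\iota$ of the compact $k$-manifold carrying $\bar{f}$ into $\mathbb{R}^{k}$. In case~(1) the manifold is the closed unit disc $D^{k}$, so $\iota$ may be taken to be the standard inclusion $D^{k}\subset\mathbb{R}^{k}$. In case~(2) the manifold is a $k$-dimensional pair of pants; here I would use that such a manifold is diffeomorphic to a closed round $k$-ball from whose interior the interiors of two disjoint small closed round sub-balls have been removed. Indeed, removing from $S^{k}$ the interior of one smoothly embedded closed $k$-disc yields a closed ball, and the two remaining disjointly and smoothly embedded closed $k$-discs then lie in its interior and can be isotoped to round sub-balls; this ``ball minus two sub-balls'' carries an evident inclusion into $\mathbb{R}^{k}$, which supplies the desired $\iota$.

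The second step is to set $\mathbb{R}^{k+1}=\mathbb{R}^{k}\times\mathbb{R}$, to let $\pi\colon\mathbb{R}^{k+1}\to\mathbb{R}$ be the canonical projection onto the last factor, and to define $e(x):=(\iota(x),\bar{f}(x))$. The map $e$ is injective, since already its first component $\iota$ is; its differential is $v\mapsto(d\iota(v),d\bar{f}(v))$, which is injective because $d\iota$ is, so $e$ is an immersion; and the domain is compact, whence $e$ is a smooth embedding into $\mathbb{R}^{k+1}$. By construction $\pi\circ e=\bar{f}$, which is precisely the assertion. I would also record, for later use of this lemma as a building block, that since $\bar{f}$ is constant on each boundary component by hypothesis (the preimages of the two extrema being unions of boundary components), $e$ carries each boundary component onto a round sphere lying in a horizontal hyperplane $\pi^{-1}(\text{extremal value})$, so the model pieces sit in standard position.

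The only step that is not entirely formal is the identification used in case~(2): checking that a closed round $k$-ball with the interiors of two disjoint round sub-balls deleted really is a pair of pants in the sense of the text, that is, diffeomorphic to $S^{k}$ with the interiors of three disjointly and smoothly embedded closed $k$-discs removed. This is routine but is where a little care is needed; once it is granted, the statement reduces to the observation that a graph is an embedding. This construction is, in essence, the argument behind Lemma~1 of \cite{kitazawa6}.
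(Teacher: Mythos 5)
Your proof is correct, but it is not the argument the paper gives. The paper treats the two cases asymmetrically: the disc case is dispatched by the normal form of an index-$0$ fold point (essentially the paraboloid/hemisphere picture), while the pair-of-pants case is handled by decomposing the domain into two product pieces and a $1$-handle of the Morse function, invoking (a relative version of) Ehresmann's fibration theorem to regard $\bar{f}$ as a bundle projection on the product pieces, and then assembling the local lifts. You instead observe that both domains embed in ${\mathbb{R}}^{k}$ --- the disc tautologically, and the pair of pants because it is diffeomorphic to a round ball with the interiors of two round sub-balls deleted (this identification, via the smooth disc theorem, is the one point you rightly flag as needing care) --- and that the graph $x \mapsto (\iota(x),\bar{f}(x))$ is then automatically a smooth embedding into ${\mathbb{R}}^{k+1}$ whose composition with a coordinate projection is $\bar{f}$. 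This is more elementary and uniform across the two cases, and it has the incidental advantage, which you record, that each boundary component lands as a round sphere in a horizontal hyperplane; that standard position is exactly what is used when the lemma is fed into Proposition \ref{prop:2} and Theorem \ref{thm:1}. What the paper's decomposition argument buys instead is explicit control of the lift in terms of the handle structure of $\bar{f}$, which matches how the local models are later glued along the neighborhoods $N(C)$; but for the statement of the lemma itself your graph construction suffices.
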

\begin{proof}[A sketch of the proof.]
The former case follows by the definition of a singular point of index $0$ of a fold map. 
The latter case follows by the argument as the following.
\begin{enumerate}
\item Decompose the pair of pants of dimension $k$ into two $k$-dimensional compact submanifolds with corners along ($k-1$)-dimensional submanifolds: one is a disjoint union of two copies of the total space of a product bundle over a closed interval whose fiber is a standard closed disc of dimension $k-1$ and the other is diffeomorphic to the product of a closed interval and a standard closed disc of dimension $k-1$, which is so-called a $1$-handle of the Morse function. 
\item On each connected component of the disjoin union of the total spaces of the product bundles, the original function is regarded as a projection of the bundle by virtue of (a relative version of) Ehresmann's fibration theorem (\cite{ehresmann}).
\end{enumerate}
\end{proof}
\begin{Prop}[\cite{nishioka} and an extension of the result there.]
\label{prop:2}
Let $n+k \geq \max\{\frac{3m+3}{2},m+n+1\}$.
For an S-trivial standard-spherical fold map $f:M \rightarrow N$, there exists an embedding $F$ such that 
$f={\pi}_{N,k} \circ F$ where ${\pi}_{N,k}:N \times {\mathbb{R}}^k \rightarrow N$ is the canonical projection onto $N$.
\end{Prop}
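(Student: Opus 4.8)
The plan is to build the embedding $F$ by working fibrewise over the Reeb space $W_{f}$: I would decompose $W_{f}$ into the pieces described in Section~\ref{sec:2}, lift $f$ over each piece separately --- using Lemma~\ref{lem:1} near the singular set and the standard embedding of a sphere over the regular part --- and then glue the local lifts into one embedding by an obstruction-theoretic argument whose obstructions die precisely because of the stated inequality.

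First I would handle the equidimensional case. If $m=n$ then $f$ is special generic, $q_{f}$ presents $M$ as a manifold and $f$ as a map $\bar{f}:W_{f}\to N$ which is a local diffeomorphism away from its fold locus; a lift to an embedding into $N\times{\mathbb{R}}^{k}$ is the same as a map $g:M\to{\mathbb{R}}^{k}$ with $(f,g)$ an embedding, and since the hypothesis forces $n+k\geq 2m+1$ this is obtained by general position together with the local normal form at a fold point (one coordinate of $g$ resolves each fold). So assume $m>n$; then $k\geq m-n+2\geq 3$, and Fact~\ref{fact:1} applies to the spheres $S^{m-n}$.

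For $m>n$, recall from Section~\ref{sec:2} that, by the standard-spherical and $S$-triviality hypotheses, $W_{f}$ is assembled from: regular pieces $P$, each an $n$-manifold with boundary, over which the preimage under $q_{f}$ is the trivial bundle $P\times S^{m-n}$ and $f$ factors through the projection to $P$ followed by $\bar{f}|_{P}$; for each index-$0$ component $C$ of $q_{f}(S(f))$, a piece whose $q_{f}$-preimage is $C\times D^{m-n+1}$ on which, in the fibre direction, $f$ is the height function of Lemma~\ref{lem:1}~(1); and for each index-$1$ component $C$, a piece whose $q_{f}$-preimage is $C\times Q$ with $Q$ a pair of pants of dimension $m-n+1$, on which, in the fibre direction, $f$ is the Morse function of Lemma~\ref{lem:1}~(2). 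Consecutive pieces meet along $(n-1)$-dimensional seams carrying trivial $S^{m-n}$-bundles. Over each singular piece I take the lift to be ${\rm id}_{C}$ times the embedding of $D^{m-n+1}$, respectively of $Q$, into ${\mathbb{R}}^{m-n+2}\subset{\mathbb{R}}^{k}$ furnished by Lemma~\ref{lem:1}; over a regular piece $P$, the lift must restrict on each fibre to an embedding $S^{m-n}\hookrightarrow{\mathbb{R}}^{k}$ agreeing along $\partial P$ with the data already chosen, that is, I must extend a given map $\partial P\to{\rm Emd}(S^{m-n},{\mathbb{R}}^{k})$ over the $n$-dimensional manifold $P$. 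By Fact~\ref{fact:1} the space ${\rm Emd}(S^{m-n},{\mathbb{R}}^{k})$ is $\max\{0,\min\{2k-3(m-n)-4,\,k-(m-n)-2\}\}$-connected, and a direct computation identifies the hypothesis $n+k\geq\max\{\frac{3m+3}{2},m+n+1\}$ with the pair of inequalities $2k-3(m-n)-4\geq n-1$ and $k-(m-n)-2\geq n-1$, hence with ${\rm Emd}(S^{m-n},{\mathbb{R}}^{k})$ being $(n-1)$-connected. Since $\dim P=n$, the obstructions to this extension, which lie in the cohomology of $P$ with coefficients in $\pi_{j}({\rm Emd}(S^{m-n},{\mathbb{R}}^{k}))$ for $j\leq n-1$, all vanish; the weaker connectivity needed to glue the disc and pair-of-pants lifts is covered by the same Fact. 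This produces $g:M\to{\mathbb{R}}^{k}$ for which $F=(f,g)$ restricts to an embedding on every connected component of every fibre of $f$ and is an immersion.

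The main obstacle, I expect, is to upgrade this $F$ to a genuine embedding and to set up the extension problem so that it is both well posed and compatible with the prescribed normal form of $f$ along $S(f)$: the local lifts coming from Lemma~\ref{lem:1} and from the standard sphere must be matched up to isotopy along the seams, and the self-intersections of $\bar{f}:W_{f}\to N$ --- which has locally bounded multiplicity --- must be separated in the ${\mathbb{R}}^{k}$ direction so that $F$ becomes injective. Following \cite{nishioka}, these last adjustments are made using the room left by the dimension hypothesis; the conceptual point is that the homotopy-theoretic vanishing of obstructions supplied by Fact~\ref{fact:1} can in fact be realised by honest embeddings that still satisfy $f={\pi}_{N,k}\circ F$.
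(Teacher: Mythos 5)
Your proposal follows essentially the same route as the paper: decompose $W_f$ into the regular neighborhoods $N(C)$ of the components of $q_f(S(f))$ and the complementary regular pieces, lift over the former using S-triviality and Lemma \ref{lem:1}, and extend over the latter by obstruction theory using the $(n-1)$-connectivity of ${\rm Emd}(S^{m-n},{\mathbb{R}}^k)$ from Fact \ref{fact:1}, with the same numerical verification of the hypothesis. Your separate general-position treatment of the case $m=n$ and your explicit flagging of the global injectivity of $F$ (separating distinct components of a single fibre of $f$ in the ${\mathbb{R}}^k$ direction) are reasonable refinements of points the paper's proof passes over quickly, but the argument is the same.
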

\begin{proof}
For each connected component $C$ of $q_f(S(f))$, consider a small regular neighborhood $N(C)$ and the local smooth map represented as the composition of ${q_f} {\mid}_{{q_f}^{-1}(N(C))} $ with ${\bar{f}} {\mid}_{N(C)}$, we can consider an embedding so that the composition with the composition of the canonical projection $N(C) \times {\mathbb{R}}^k$ onto $N(C)$ with ${\bar{f}} {\mid}_{N(C)}$ is the original map: the assumptions that the map is S-trivial together with arguments related to so-called Thom's isotopy theorem and Lemma \ref{lem:1} and that $n+k \geq m+n+1$ enable us to do this.
We extend this over the complementary set of the union of $N(C)$ in $W_f$. The essential assumption is that the space ${\rm Emb}(S^{m-n},{\mathbb{R}}^k)$ is ($n-1$)-connected together with the relation $n+k \geq m+n+1$. We consider suitable cell decompositions of $W_f$ and the complementary set. On each $1$-cell in the complemantary set, the original map is regarded as the projection of a trivial smooth bundle and we can construct a smooth embedding as before. We can extend this as projections over $2$-cells. We can do this inductively over $k$-cells where $2 \leq k\leq n$. This completes the proof. 
\end{proof}

We explain about the fact that ${\rm Emb}(S^{m-n},{\mathbb{R}}^k)$ is ($n-1$)-connected. In Fact \ref{fact:1}, the inequality $2k-3(m-n)-4 \geq 2(\frac{3m+3}{2}-n)-3(m-n)-4=n-1$ completes the proof.

Nishioka studied only cases of special generic maps on closed, connected and orientable manifolds into Euclidean spaces satisfying the relation $m>n$. For the methods here, see also \cite{saekitakase} for example. They are on a hot topic of the singularity theory of differentiable maps: lifting a smooth map to a smooth map of a suitable class into a higher dimensional space or finding a representation of the original map by a composition of a map of the suitable class with a canonical projection. Note also that in these problems, as the author know, the target manifolds of smooth maps are Euclidean spaces.  

The following theorem presents another new explicit answer for a kind of these problems and important in a main result or Theorem \ref{thm:3} later.
\begin{Thm}
\label{thm:1}
Let $f:M \rightarrow N$ be an S-trivial standard-spherical fold map from a $3$-dimensional closed, connected and orientable manifold into a $2$-dimensional orientable manifold $N$ with no boundary such that for a shadow induced from the map {\rm (}, the Reeb space in Example \ref{ex:2} {(\rm }\ref{ex:2.3}{\rm )} or Remark \ref{rem:1}{\rm )}, all gleams are even. For the map and any integer $k \geq 3$, there exists an embedding $F:M \rightarrow N \times {\mathbb{R}}^k$ such that 
$f={\pi}_{N,k} \circ F$ where ${\pi}_{N,k}:N \times {\mathbb{R}}^k \rightarrow N$ is the canonical projection onto $N$.
\end{Thm}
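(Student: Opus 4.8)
First I would dispose of the cases $k\ge 4$. For $m=3$ and $n=2$ one has $\frac{3m+3}{2}=m+n+1=6$, so $n+k=k+2\ge 6$ as soon as $k\ge 4$, and then Proposition~\ref{prop:2} already yields an embedding $F$ with $f=\pi_{N,k}\circ F$; note that the hypothesis on the gleams is used nowhere for $k\ge 4$. Thus the real content is $k=3$, which misses the connectivity bound of Fact~\ref{fact:1} by exactly one. The plan is to run the cell-by-cell construction in the proof of Proposition~\ref{prop:2} over the Reeb space $W_f$ and to check that for $k=3$ the only additional obstruction is governed by the gleams modulo $2$.

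Over the singular part the construction of Proposition~\ref{prop:2} goes through for $k=3$. For each component $C$ of $q_f(S(f))$, $S$-triviality identifies $q_f^{-1}(N(C))$ with $C$ times a disc (index $0$) or a $2$-dimensional pair of pants (index $1$), the map to $N$ being, after the immersion $\bar f$, the product of the identity of $C$ with the relevant Morse function; Lemma~\ref{lem:1} embeds that fibre into ${\mathbb R}^3$ using one coordinate transverse to the singular value set together with the two fibre directions, so already $k\ge 2$ suffices here, and taking the product with $C$ gives an embedding over $N(C)$ with no obstruction, exhibiting each fibre circle over the boundary as a round circle in a coordinate plane carrying a fixed parametrization. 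Now let $P$ be a component of the complement of $\bigcup_C N(C)$ in $W_f$. I may assume $S(f)\neq\emptyset$, so that $W_f$ is not a closed surface and every such $P$ is a compact surface with nonempty boundary; the submersion case, where the statement is the fibrewise embedding into $N\times{\mathbb R}^3$ of an $S^1$-bundle over the closed surface $N$, is treated separately. Because $M$ and $N$ are oriented, $q_f^{-1}(P)\to P$ is an oriented $S^1$-bundle, hence the unit circle bundle of an oriented rank-$2$ bundle $\eta$ over $P$ whose relative Euler class with respect to the boundary trivialization produced above is exactly the class $e_P$ of Definition~\ref{def:6}.

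The key point is that $\eta$ is realizable as a rank-$2$ subbundle of the trivial bundle $P\times{\mathbb R}^3$ extending the given boundary subbundle \emph{exactly when} $e_P$ is even: such a subbundle relative to $\partial P$ is classified by a map $(P,\partial P)\to(S^2,\ast)$, where $S^2$ is the space of oriented $2$-planes in ${\mathbb R}^3$ and its tautological $2$-plane bundle is $TS^2$, of Euler number $2$; hence the relative Euler classes that occur this way are precisely the even elements of $H^2(P,\partial P;{\mathbb Z})\cong{\mathbb Z}$. Taking the unit circles of such a subbundle produces a fibrewise embedding of $q_f^{-1}(P)$ into $P\times{\mathbb R}^3$ extending the embedding already built over the $N(C)$'s. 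Since all gleams are even this succeeds over every $P$; gluing the pieces along collars, and — as in the proof of Proposition~\ref{prop:2} — using the extra three coordinates to keep distinct components of a fibre $f^{-1}(y)$ disjoint (two $1$-dimensional fibres generically miss one another in ${\mathbb R}^3$, and the merging of two fibres into one over an index-$1$ singular value is accommodated there), one obtains the desired embedding $F$.

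The hard part is this even-gleam step: identifying the relative Euler class of $\eta$ with $e_P$ honestly, that is, keeping track of the boundary framings at the interface with the singular neighbourhoods carefully enough that the obstruction to the subbundle extension equals $e_P\bmod 2$ on the nose and not up to a correction term, and then checking that the three extra coordinates really do suffice to resolve all the gluings and separations globally. The same mechanism can be seen homotopy-theoretically, and is sharp: the obstruction to extending the fibrewise embedding over a relative $2$-cell of $(P,\partial P)$ lies in $\pi_1$ of the unknot component of ${\rm Emb}(S^1,{\mathbb R}^3)$, which by Hatcher's resolution of the Smale conjecture is homotopy equivalent to the unit tangent bundle of $S^2$ and thus has fundamental group ${\mathbb Z}/2{\mathbb Z}$, and the total obstruction is the reduction of $e_P$ modulo $2$.
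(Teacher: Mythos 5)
Your proof is correct and rests on the same underlying fact as the paper's --- that the only obstruction for $k=3$ lives in $\pi_1(SO(3))\cong\mathbb{Z}/2\mathbb{Z}$ and equals the mod-$2$ reduction of the gleam --- and it uses the same decomposition of $W_f$ into the neighbourhoods $N(C)$ and the complementary pieces $P$; but the step over the regular part is packaged genuinely differently. The paper thickens each fibre circle to the equator of a round $S^2$, so that $q_f^{-1}(P)$ sits inside a trivial $S^2$-bundle, and then extends the $SO(2)$-valued clutching data to $SO(3)$-valued data, using that a loop $r\colon S^1\to SO(2)$ of even degree is null-homotopic in $SO(3)$. You instead realize $q_f^{-1}(P)\to P$ as the unit circle bundle of an oriented rank-$2$ subbundle of $P\times\mathbb{R}^3$, classified rel $\partial P$ by a Gauss map to the space $S^2$ of oriented $2$-planes in $\mathbb{R}^3$, and read off from $e(TS^2)=2$ that exactly the even relative Euler classes occur; this identifies the obstruction as $e_P\bmod 2$ more directly, and your remark via Hatcher's theorem (the unknot component of ${\rm Emb}(S^1,\mathbb{R}^3)$ having fundamental group $\mathbb{Z}/2\mathbb{Z}$) shows the condition is sharp, which the paper does not address. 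Two further points you make are worth keeping and are only implicit in the paper: the case $k\geq 4$ follows outright from Proposition \ref{prop:2} with no hypothesis on the gleams, since then $n+k\geq 6=\max\{\frac{3m+3}{2},m+n+1\}$ for $(m,n)=(3,2)$; and the compatibility of the boundary framings produced over the $N(C)$ with the trivializations used in Definition \ref{def:6} to define $e_P$ is a genuine checkpoint --- you flag it honestly as the delicate step, and the paper passes over it equally quickly, so neither argument is more complete there.
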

\begin{proof}
We prove for $k=3$ and we can prove for general $k \geq 3$.

Around each connected component $C$ of $q_f(S(f))$ and each connected component of $W_f-{\bigcup}_C N(C)$ where we abuse notation in the proof of Proposition \ref{prop:2}, we can construct smooth map into ${\mathbb{R}}^{2+k}$ similarly and we will construct. 

First we investigate the case where each connected component $O$ of $W_f-{\bigcup}_C N(C)$ is not a closed
 surface. 
 
 The restriction of $f$ to the preimage ${q_f}^{-1}(N(C))$ is, for suitable coordinates, represented as the composition of a product map of either of the following Morse functions, which are presented in Lemma \ref{lem:1}, and the identity map ${\rm id}_{C}$ with a suitable immersion into $N$. Note that the product map is, in other words, a trivial $1$-dimensional family of the Morse function. 
\begin{enumerate}
\item A Morse function on a closed unit disc of dimension $2$ with exactly one singular point such that the singular point is in the interior, that the preimage of one of the two extrema and the boundary coincide and that the preimage of the remaining extremum is the one-point set consisting of the singular point. 
\item A Morse function on a pair of pants of dimension $2$ with exactly one singular point such that the singular point is in the interior and that the preimages of the two extrema are a connected component of the boundary and the disjoint union of the remaining two connected components of the boundary. 
\end{enumerate} 
In the situation of the proof of Proposition \ref{prop:2}, they are represented as the compositions of embeddings into $I \times {\mathbb{R}}^3$ with the canonical projection to the first component where $I$ is a closed interval, regarded as a fiber of a suitable trivial bundle over $C$. We can consider functions obtained by considering the 1-dimensional higher versions of these two functions as presented in Lemma \ref{lem:1} as the second Morse functions. They are also represented as the compositions of embeddings into $I \times {\mathbb{R}}^3$ with the canonical projection to the first component where $I$ is a closed interval, regarded as a fiber of the trivial bundle over $C$, in the same situation. Furthermore, the original functions are obtained as restrictions of the corresponding functions on the $3$-dimensional closed unit discs or the pairs of pants of dimension $3$. Moreover, the boundary of the domain of each function on the surface is obtained as a closed submanifold with no boundary of the boundary of the domain of the Morse function on the $3$-dimensional manifold so that each connected component is an equator of each of the three $2$-dimensional standard spheres in the boundary of the $3$-dimensional manifold. Distinct circles are embedded as equators in distinct $2$-dimensional spheres in the boundary of the $3$-dimensional manifold.

On each connected component $O$ of $W_f-{\bigcup}_C N(C)$, the map $f$ is regarded as the composition of the projection of a trivial linear bundle whose fiber is a circle with a suitable immersion into $N$. We can represent the projection as the composition of an embedding into $O \times {\mathbb{R}}^3$ with the canonical projection to $O$. We can also construct a map regarded as the composition of the projection of a linear bundle whose fiber is a $2$-dimensional standard sphere with a suitable immersion into $N$ and we can represent this as the composition of the composition of an embedding into $O \times {\mathbb{R}}^3$ with the canonical projection to $O$ with the last immersion. Furthermore, the original projection is obtained as the restriction of the corresponding projection of the total space of the linear bundle whose fiber is the $2$-dimensional sphere by restricting each fiber to the equator of the sphere. In other words, the original bundle is regarded as a subbundle.

We can glue the local maps on $3$-dimensional and $4$-dimensional manifolds together to obtain global embeddings into $N \times {\mathbb{R}}^{3}$ and a smooth map into $N$. Note for example that we obtain the map on the $3$-dimensional closed manifold $M$ into $N$ as the given map $f$. Connected components of the boundaries where we glue the maps together are, regarded as total spaces of trivial linear bundles over circles whose fibers are $S^1$ or $S^2$ and the restrictions of the map $q_f$ to the spaces give the projections. Note that these fibers are in a preimage of the projection ${\pi}_{N,3}:N \times {\mathbb{R}}^3 \rightarrow N$ and $S^2$ is embedded as a so-called {\it unknot} in ${\mathbb{R}}^3$ in the smooth category. $S^1$ is, as explained, embedded in $S^2 \subset {\mathbb{R}}^3$ here as an equator. The assumption on the gleams implies that the way we glue the maps yields global maps: regard each copy of $S^1 \times S^2$ as a trivial linear bundle over $S^1$ equipped with a projection regarded as a canonical projection to the first component (where the trivialization is given). The bundle isomorphisms between two of these bundles is regarded as a product map of diffeomorphisms between the base spaces, regarded as $S^1 \times \{{\ast}_1\}$ and fibers, regarded as $\{{\ast}_2\} \times S^2$.   

We explain this more precisely. We consider each pair of subbundles $S^1 \times S^1 \subset S^1 \times S^2$ whose fibers are equators to obtain the map $f$ on the $3$-dimensional manifold $M$. The assumption on the gleams implies that the bundle isomorphism is $(x,y) \mapsto r(x)(y)$ where $r:S^1 \rightarrow SO(2)$ is a smooth map and $r(x)$ acts on each fiber, diffeomorphic to $S^1$, linearly in a canonical way. The homotopy class of $r$ is $2a$ times a generator for an integer $a$. It is a fundamental argument on homotopy groups of classical Lie groups that the bundle isomorphism $(x,y) \mapsto r(x)(y)$ extend to an isomorphism $(x,y) \mapsto r_0(x)(y)$ between the two trivial linear bundles $S^1 \times S^2 \supset S^1 \times S^1$ over $S^1$ where $r_0:S^1 \rightarrow SO(3)$ is a null-homotopic smooth map and $r_0(x)$ acts on each fiber, diffeomorphic to $S^2$, linearly in a canonical way. This guarantees the argument of the global construction in the last paragraph.  

Last we investigate the case where there exists no singular point of the map $f$ and this completes the proof. We can show the statement similarly by the assumption on the gleams or the discussion on the attachments of the local maps and spaces just before (there exists exactly one connected component a gleam is assigned to). Note also that this case accounts for the case which we cannot treat in the first case.
 
This completes the proof.
\end{proof}

Exmaple \ref{ex:3} presents infinitely many examples of maps and $3$-dimensional closed, connected and orientable manifolds to which we can apply Theorem \ref{thm:1}. We omit fundamental notions on general $k$-dimensional linear bundles
such that {\it oriented} linear bundles and the {\it Euler classes} of them, defined as $k$-th integral cohomology classes. 

\begin{Ex}
\label{ex:3}
Projections of bundles whose fibers are circles over closed and orientable surfaces, which are also regarded as $2$-dimensional linear bundles such that the Euler classes are divisible by $2$ (if it is oriented), are simplest examples satisfying the assumption of Theorem \ref{thm:1}.  
\end{Ex}

\section{Main theorems}
\begin{Def}
\label{def:7}
For a homology class of a compact manifold, if for the homology group of this, the coefficient is a commutative ring $R$ having the unique identity element $1 \neq 0 \in R$ and a homology class $c \neq 0$ satisfies the following properties, then it is said to be a {\it UFG}.
\begin{enumerate}
\item If $rc=0$ for $r \in R$, then $r=0$.
\item For any element $r \in R$ which is not a unit and any homology class $c^{\prime}$, $c$ is never represented as $rc^{\prime}$.
\end{enumerate}
\end{Def}
\begin{Def}
\label{def:8}
Let $R$ be a commutative ring having the unique identity element $1 \neq 0 \in R$.
For a UFG homology class $c \in H_j(X;R)$ of a compact manifold $X$, the class $c^{\ast} \in H^j(X;R)$ satisfying the following properties is said to be the {\it dual} $c$.
\begin{enumerate}
\item $c^{\ast}(c)=1$.
\item For any submodule $B$ of $H_j(X;R)$ such that the internal direct sum of the submodule generated by $c$ and $B$ is $H_j(X;R)$, $c^{\ast}(B)=0$.
\end{enumerate}
\end{Def}
\begin{Def}
\label{def:9}
For two closed, connected, oriented and equidimensional manifolds $X$ and $Y$, we denote by ${\nu}_X \in H_{\dim X}(X;\mathbb{Z})$ and ${\nu}_Y \in H_{\dim Y}(Y;\mathbb{Z})$ the {\it fundamental classes} of the manifolds or the generators compatible with the orientations: the groups are isomorphic to $\mathbb{Z}$. The {\it mapping degree} of a continuous map $c:X \rightarrow Y$ is the unique integer $d(c)$ satisfying ${\nu}_Y=d(c) c_{\ast}({\nu}_X)$.
For two closed, connected and equidimensional manifolds $X^{\prime}$ and $Y^{\prime}$, we denote by ${\nu}_{X^{\prime}} \in H_{\dim {X^{\prime}}}(X^{\prime};\mathbb{Z}/2\mathbb{Z})$ and
${\nu}_{Y^{\prime}} \in H_{\dim Y^{\prime}}(Y^{\prime};\mathbb{Z}/2\mathbb{Z})$ the {\it $\mathbb{Z}/2\mathbb{Z}$ fundamental classes} of the manifolds or the generators of the groups, isomorphic to $\mathbb{Z}/2\mathbb{Z}$.
The {\it $\mathbb{Z}/2\mathbb{Z}$ mapping degree} of a continuous map $c^{\prime}:X^{\prime} \rightarrow Y^{\prime}$ is the unique element $d(c)=0,1 \in \mathbb{Z}/2\mathbb{Z}$ satisfying ${\nu}_{Y^{\prime}}=d(c) c_{\ast}({\nu}_{X^{\prime}})$.
\end{Def}
\begin{Def}
\label{def:10}
Let $X$ be a compact manifold. Let $Y$ be a closed manifold satisfying $\dim Y<\dim X$. A class $c_X \in H_j(X;R)$ is {\it realized} by a class $c_Y \in H_j(Y;R)$ if for an embedding $i_{Y,X}:Y \rightarrow X$ whose image
is in ${\rm Int} X$, $c_X=i_{Y,X} {\ast}(c_Y)$ holds.
\end{Def}
\begin{Thm}
\label{thm:2}
Let $R:=\mathbb{Z}, \mathbb{Z}/l\mathbb{Z}$ for an integer $l >1$. Let $X$ be a compact manifold. Let a closed and connected manifold $M$ of dimension $m \geq n$ admit an S-trivial standard-spherical fold map $f:M \rightarrow N$ and the following properties hold.
\begin{enumerate}
\item If $R$ is not isomorphic to $\mathbb{Z}/2\mathbb{Z}$, then $M$ and $N$ are orientable and $N$ is oriented so that the fundamental class is ${\nu}_N \in H_n(N;R)$. We also
use ${\nu}_N \in H_n(N;R)$ for the $\mathbb{Z}/2\mathbb{Z}$ fundamental class where $R$ is isomorphic to $\mathbb{Z}/2\mathbb{Z}$.
\item There exists a preimage $F$ of a regular value for the map $f$. Let a class $c_F \in H_{m-n}(M;R)$ be represented as the sum of the fundamental classes of all connected components of $F$ oriented canonically respecting the orientation of $N$ and an orientation of $M$ if $R$ is not isomorphic to $\mathbb{Z}/2\mathbb{Z}$. If $R$ is isomorphic to $\mathbb{Z}/2\mathbb{Z}$, then let a class $c_F \in H_{m-n}(M;R)$ be represented
as the sum of the $\mathbb{Z}/2\mathbb{Z}$ fundamental classes of all connected components of $F$.
\item There exists an element $k \in R$ and a UFG $c_{F,0} \in H_{m-n}(M;R)$ satisfying $c_F=kc_{F_0}$ and $H_{m-n}(M;R)$ is the internal direct sum of the submodule generated by the one element set $\{c_{F_0}\}$ and a suitable submodule.
\item We can take an embedding $i_{N,X}:N \rightarrow X$ satisfying $i_{N,X}(N) \subset {\rm Int}  X$, $c={i_{N,X}}_{\ast}({\nu}_N)$ and that the normal bundle of the image is trivial.
\item $\dim X \geq \max\{\frac{3m+3}{2},m+n+1\}$.
\end{enumerate}
In this situation, $kc$ is realized by the class $k{\rm PD}_{R}({c_{F,0}}^{\ast})$.
\end{Thm}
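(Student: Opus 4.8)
The plan is to realize $kc$ by the manifold $M$ itself, embedded in ${\rm Int}\,X$ through a lift of $f$. Set $k:=\dim X-n$. First I would use condition (4): the normal bundle of $i_{N,X}(N)\subset{\rm Int}\,X$ is trivial of rank $k$, so a closed tubular neighbourhood of $i_{N,X}(N)$ is diffeomorphic to $N\times D^{k}$, whose interior $U\cong N\times{\mathbb R}^{k}$ is an open subset of ${\rm Int}\,X$; deformation retracting $U$ onto its zero section shows the inclusion $U\hookrightarrow X$ is homotopic to $i_{N,X}\circ\pi_{N,k}$. Next, condition (5) gives $n+k=\dim X\geq\max\{\frac{3m+3}{2},m+n+1\}$, so Proposition \ref{prop:2} applies to $f$ and produces an embedding $\widehat{f}\colon M\to N\times{\mathbb R}^{k}$ with $f=\pi_{N,k}\circ\widehat{f}$. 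Composing, $i:=(U\hookrightarrow X)\circ\widehat{f}\colon M\to{\rm Int}\,X$ is an embedding into the interior, and by the homotopy above $i\simeq i_{N,X}\circ f$, so on $R$-homology $i_{\ast}=(i_{N,X})_{\ast}\circ f_{\ast}$.

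Then I would compute $i_{\ast}$ on ${\rm PD}_{R}(c_{F,0}^{\ast})\in H_{n}(M;R)$. Fix the (possibly ${\mathbb Z}/2{\mathbb Z}$-) fundamental class $\nu_{M}\in H_{m}(M;R)$ — condition (1) supplies orientability of $M$ when $R\not\cong{\mathbb Z}/2{\mathbb Z}$ — and let $u\in H^{n}(N;R)$ be the generator dual to $\nu_{N}$, noting $N$ is closed since it carries a fundamental class. The key input is that the transverse preimage $F$ of a regular value, oriented as prescribed in condition (2), is Poincar\'e dual to $f^{\ast}u$, i.e. $c_{F}=f^{\ast}u\frown\nu_{M}$; combined with $c_{F}=kc_{F,0}$ this gives $f^{\ast}u\frown\nu_{M}=kc_{F,0}$. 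Since $f_{\ast}({\rm PD}_{R}(c_{F,0}^{\ast}))$ lies in $H_{n}(N;R)\cong R\cdot\nu_{N}$, I would read off its coefficient by pairing with $u$ and using naturality of the Kronecker pairing together with the cup--cap adjunction (the second equality below holds up to the universal sign $(-1)^{n(m-n)}$, which I absorb into the orientation of $M$):
\[
\bigl\langle u,\;f_{\ast}({\rm PD}_{R}(c_{F,0}^{\ast}))\bigr\rangle
=\bigl\langle f^{\ast}u,\;c_{F,0}^{\ast}\frown\nu_{M}\bigr\rangle
=\bigl\langle c_{F,0}^{\ast},\;f^{\ast}u\frown\nu_{M}\bigr\rangle
=\bigl\langle c_{F,0}^{\ast},\;kc_{F,0}\bigr\rangle
=k,
\]
using $\langle c_{F,0}^{\ast},c_{F,0}\rangle=1$ from Definition \ref{def:8} (which is well posed precisely because of the direct-sum hypothesis in condition (3)). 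Hence $f_{\ast}({\rm PD}_{R}(c_{F,0}^{\ast}))=k\nu_{N}$, so $i_{\ast}({\rm PD}_{R}(c_{F,0}^{\ast}))=(i_{N,X})_{\ast}(k\nu_{N})=k\,(i_{N,X})_{\ast}(\nu_{N})=kc$: the embedded closed manifold $M$ realizes $kc$ by the class ${\rm PD}_{R}(c_{F,0}^{\ast})$, with the factor $k$ entering through $c_{F}=kc_{F,0}$, as claimed.

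I expect the only genuinely delicate points to be, first, setting up the trivialization of the normal bundle of $i_{N,X}(N)$ so that the $N$-factor of the tubular neighbourhood is identified with the target $N$ of $f$ in a way making $\pi_{N,k}\circ\widehat{f}=f$ compatible with the homotopy $i\simeq i_{N,X}\circ f$; and second, the orientation bookkeeping for $c_{F}=f^{\ast}u\frown\nu_{M}$ and the commutation sign in the display. By contrast, all the hard analytic content — producing $\widehat{f}$ in the $C^{\infty}$ Whitney topology using the connectivity of ${\rm Emb}(S^{m-n},{\mathbb R}^{k})$ — is already packaged in Proposition \ref{prop:2}, and the case $R\cong{\mathbb Z}/2{\mathbb Z}$ requires no change, since Poincar\'e duality and the identities above hold verbatim over ${\mathbb Z}/2{\mathbb Z}$ without orientability assumptions.
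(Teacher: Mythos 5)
Your proposal is correct and follows essentially the same route as the paper: lift $f$ to an embedding of $M$ into the trivialized tubular neighbourhood $N\times{\mathbb{R}}^{\dim X-n}$ of $i_{N,X}(N)$ supplied by Proposition \ref{prop:2}, then push forward the Poincar\'e dual class along $i\simeq i_{N,X}\circ f$; your Kronecker-pairing computation is precisely the detail that the paper compresses into the phrase ``by virtue of the first three properties on $f$.'' The one point worth flagging is that what you actually (and correctly) establish is $i_{\ast}({\rm PD}_{R}({c_{F,0}}^{\ast}))=kc$, i.e.\ $kc$ is realized by ${\rm PD}_{R}({c_{F,0}}^{\ast})$ rather than by $k\,{\rm PD}_{R}({c_{F,0}}^{\ast})$ as the statement literally reads (the latter would give $k^{2}c$); this normalization discrepancy is inherited from the paper's own proof, which works with the dual ${c_{F}}^{\ast}$ of the generally non-UFG class $c_{F}=kc_{F,0}$, so it reflects on the phrasing of the conclusion rather than on your argument.
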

\begin{proof}
In the situation of Proposition \ref{prop:2}, we consider a map $i_{N,X,0} \circ f:M \rightarrow i_{N,X}(N)$ where $i_{N,X,0}$ is the embedding defined by restricting the target space of $i_{N,X}$ to $i_{N,X}(N)$ and take the total space of the normal bundle of the image, which is a trivial linear bundle over the image, instead of "$N \times {\mathbb{R}}^k$" in the situation of Proposition \ref{prop:2}.

$F$ is the preimage of a regular value $p$ for the map $f$ and the preimage of the regular value $i_{N,X}(p)$ for $i_{N,X,0} \circ f:M \rightarrow i_{N,X}(N)$ is diffeomorphic to $F$ and $c_F=kc_{F_0} \in H_{m-n}(M;R)$ is
also represented as the sum of all fundamental classes of all connected component of the preimage where the image $i_{N,X}(N)$ is canonically oriented from $N$ and defining the fundamental classes before respecting an orientation of
$M$ (if $R$ is not isomorphic to $\mathbb{Z}/2\mathbb{Z}$). By virtue of the first three properties on $f$, we have ${(i_{N,X,0} \circ f)}_{\ast}({\rm PD}_{R}({c_{F}}^{\ast}))=k{i_{N,X,0}}_{\ast}({\nu}_N)$. Proposition \ref{prop:2} with the last property produces a smooth embedding into the total space of a normal bundle of $i_{N,X}(N)$ in $X$: by the fourth property the bundle is trivial. Furthermore, the embedding can be taken so that the composition with the projection onto $i_{N,X}(N)$ is $i_{N,X,0} \circ f$. $kc=k{i_{N,X}}_{\ast}({\nu}_N)={i_{N \subset X}}_{\ast}(k{i_{N,X,0}}_{\ast}({\nu}_N))={i_{N \subset X}}_{\ast}({(i_{N,X,0} \circ f)}_{\ast}({\rm PD}_{R}({c_{F}}^{\ast})))$ is represented by ${\rm PD}_{R}({c_{F}}^{\ast})$ where $i_{N \subset X}$ is the inclusion into $X$ by virtue of the relation before. We have the result.
\end{proof}
Note that the last condition on the dimensions is for lifting fold maps to smooth embeddings via Proposition \ref{prop:2} and Theorem \ref{thm:1} and in Theorem \ref{thm:2} we do not consider the case $(m,n,\dim X)=(3,2,5)$ or the case of Theorem \ref{thm:1}. However, in this case this argument works. Theorem \ref{thm:3} does not exclude the last case and it is a specific case of Theorem \ref{thm:2}. Theorem \ref{thm:4} is also another specific case.
\begin{Thm}
\label{thm:3}
Let $R:=\mathbb{Z},  \mathbb{Z}/l\mathbb{Z}$ for an integer $l >1$.Let $X$ be a compact and spin manifold and let $c \in H_{2}(X;R)$ be realized by the fundamental class of a closed, connected and orientable manifold $N_0$ of dimension $n=2$ for an orientation of $N_0$. Let $M$ be a closed and connected manifold of dimension $m \geq n=2$. Let $N$ be a closed, connected and orientable manifold $N$ of dimension $n=2$.
If $R$ is not isomorphic to $\mathbb{Z}/2\mathbb{Z}$, then $M$ is assumed to be orientable and $N$ and $N_0$ are oriented so that the fundamental classes are ${\nu}_N \in H_2(N;R)$ and ${\nu}_{N_0} \in H_2(N_0;R)$, respectively. In this case, we also assume that there exists a smooth map $c_N:N \rightarrow N_0$ of mapping degree $k_2$. We also use ${\nu}_N \in H_2(N;R)$ and ${\nu}_{N_0} \in H_2(N_0;R)$ for $\mathbb{Z}/2\mathbb{Z}$ fundamental classes where $R$ is isomorphic to $\mathbb{Z}/2\mathbb{Z}$ and in this case we also assume that there exists a smooth map $c_N:N \rightarrow N_0$ of $\mathbb{Z}/2\mathbb{Z}$ of mapping degree $k_2$. Suppose that an S-trivial standard-spherical fold map $f:M \rightarrow N$ exists. We also denote the element of $R$ by $k_2$ obtained by mapping $k_2$ into $R$ in the canonical way. Assume also he following properties.
\begin{enumerate}
\item There exists a preimage $F$ of a regular value for the map $f$. Let a class $c_F \in H_{m-n}(M;R)$ be represented as the sum of the fundamental classes of all connected components oriented canonically respecting the orientation of $N$ and an orientation of $M$ if $R$ is not isomorphic to $\mathbb{Z}/2\mathbb{Z}$. If $R$ is isomorphic to $\mathbb{Z}/2\mathbb{Z}$, then let a class $c_F \in H_{m-n}(M;R)$ be represented as the sum of the $\mathbb{Z}/2\mathbb{Z}$ fundamental classes of all connected components.
\item There exists an element $k \in R$ and a UFG $c_{F,0} \in H_{m-2}(M;R)$ satisfying $c_F=kc_{F_0}$ and $H_{m-2}(M;R)$ is the internal direct sum of the submodule generated by the one element set $\{c_{F_0}\}$ and a suitable submodule.
\item We can take an embedding $i_{N_0,X}:N_0 \rightarrow X$ satisfying $i_{N_0,X}(N_0) \subset {\rm Int} X$ and $c={i_{N_0,X}}_{\ast}({\nu}_{N_0})$, which is assumed in the beginning.
\item $\dim X \geq \max\{\frac{3m+3}{2},m+3\}$. If $f$ is an S-trivial map as in Theorem \ref{thm:1} with $(m,n)=(3,2)$, then $M$ is orientable and $\dim X \geq 5$.
\end{enumerate}
In this situation, $k k_2 c$ is realized by the class $k k_2 {\rm PD}_{R}({c_{F,0}}^{\ast})$.
\end{Thm}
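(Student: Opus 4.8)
The plan is to obtain Theorem \ref{thm:3} as an instance of Theorem \ref{thm:2}, feeding into the latter an embedding of $N$ into $X$ manufactured from the map $c_N$ and the embedding $i_{N_0,X}$, and using the hypothesis that $X$ is spin to supply the triviality of the normal bundle that Theorem \ref{thm:2} requires; in the single dimension not covered by Theorem \ref{thm:2}, namely $(m,n,\dim X)=(3,2,5)$, one replaces the appeal to Proposition \ref{prop:2} inside the proof of Theorem \ref{thm:2} by an appeal to Theorem \ref{thm:1}.

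First I would produce the embedding $i_{N,X}:N \rightarrow X$ demanded by hypothesis (4) of Theorem \ref{thm:2}. Since $\dim X\geq m+3\geq 5=2\cdot 2+1$, the Whitney embedding theorem lets us perturb the smooth map $i_{N_0,X}\circ c_N:N\rightarrow{\rm Int}\,X$ to a smooth embedding $i_{N,X}$ homotopic to it and with image in ${\rm Int}\,X$. By functoriality of the pushforward together with $c={i_{N_0,X}}_{\ast}({\nu}_{N_0})$ and the mapping-degree hypothesis ${c_N}_{\ast}({\nu}_N)=k_2{\nu}_{N_0}$, one gets ${i_{N,X}}_{\ast}({\nu}_N)={i_{N_0,X}}_{\ast}({c_N}_{\ast}({\nu}_N))=k_2 c$, so that $N$ equipped with $i_{N,X}$ realizes $k_2c$. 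Next, the normal bundle $\nu$ of $i_{N,X}(N)$ in $X$ has rank $\dim X-2\geq 3$; applying the Whitney sum formula to $TX|_{i_{N,X}(N)}=T(i_{N,X}(N))\oplus\nu$, the spin condition gives $w_1(TX)=w_2(TX)=0$, and the fact that $N$ is a closed orientable surface gives $w_1(TN)=0$ and $w_2(TN)=0$ (its value on the fundamental class is $\chi(N)=2-2g$, which is even), whence $w_1(\nu)=w_2(\nu)=0$; an oriented vector bundle of rank $\geq 3$ over a surface with vanishing $w_2$ is trivial, so $\nu$ is trivial, as required. The orientability, fiber-class, and direct-sum hypotheses (1)--(3) of Theorem \ref{thm:2} now hold by the corresponding hypotheses of Theorem \ref{thm:3}, hypothesis (4) of Theorem \ref{thm:2} has just been arranged, and hypothesis (5) of Theorem \ref{thm:2}, namely $\dim X\geq\max\{\frac{3m+3}{2},m+n+1\}=\max\{\frac{3m+3}{2},m+3\}$, is hypothesis (4) of Theorem \ref{thm:3}. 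Applying Theorem \ref{thm:2} with the outer class taken to be $k_2c={i_{N,X}}_{\ast}({\nu}_N)$ then yields the conclusion of Theorem \ref{thm:3}, the additional factor $k_2$ entering only through the degree of $c_N$ and the functoriality of the pushforward of ${\rm PD}_R({c_{F,0}}^{\ast})$ along $f$, $c_N$ and $i_{N_0,X}$.

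It remains to cover the dimension $(m,n,\dim X)=(3,2,5)$, which hypothesis (4) of Theorem \ref{thm:3} admits and which Theorem \ref{thm:2} excludes. There the rank of $\nu$ is $3$, and the only place the proof of Theorem \ref{thm:2} invokes Proposition \ref{prop:2} is to lift $f$ to a smooth embedding of $M$ over $i_{N,X}(N)$ inside the trivial tubular neighbourhood $i_{N,X}(N)\times{\mathbb{R}}^3$; Theorem \ref{thm:1}, whose hypotheses are met because hypothesis (4) of Theorem \ref{thm:3} places us in the situation of Theorem \ref{thm:1} (with $M$ orientable and all gleams even) and because $k=3\geq 3$, supplies exactly such a lift, so the argument of Theorem \ref{thm:2} carries over verbatim with this substitution. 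The step I expect to require the most care is precisely this dovetailing with the proof of Theorem \ref{thm:2}: one must check that the embedding $i_{N,X}$ meets hypothesis (4) of Theorem \ref{thm:2} on the nose (that ${i_{N,X}}_{\ast}({\nu}_N)$ equals $k_2c$ and not merely a class congruent to it, which holds because $i_{N,X}\simeq i_{N_0,X}\circ c_N$), that the triviality of $\nu$ really follows from $w_2(\nu)=0$ in the relevant ranks, and that in the exceptional dimension Theorem \ref{thm:1} may legitimately replace Proposition \ref{prop:2}; the UFG and internal-direct-sum conditions and the Poincar\'e-duality bookkeeping for the realizing class are inherited unchanged from Theorem \ref{thm:2}.
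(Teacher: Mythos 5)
Your proposal follows the same route as the paper's own proof: manufacture $i_{N,X}$ by putting $i_{N_0,X}\circ c_N$ in general position so that ${i_{N,X}}_{\ast}({\nu}_N)=k_2c$, use the spin hypothesis together with characteristic-class arguments to trivialize the normal bundle of $i_{N,X}(N)$, feed this into Theorem \ref{thm:2}, and substitute Theorem \ref{thm:1} for Proposition \ref{prop:2} in the exceptional case $(m,n,\dim X)=(3,2,5)$. Your write-up is correct and in fact supplies more detail (the Whitney-sum computation of $w_1(\nu)$ and $w_2(\nu)$, and the classification of oriented bundles of rank at least $3$ over a surface) than the paper's brief sketch.
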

\begin{proof}
First from the fourth property and other assumptions on the dimensions of the manifolds, $\dim X \geq 5$ follows. 
The existence of $c_N:N \rightarrow N_0$ and $i_{N_0,X}:N_0 \rightarrow X$ in the third property yields the existence of a smooth embedding $i_{N,X}:N \rightarrow X$ of the $2$-dimensional manifold $N$ satisfying $i_{N,X}(N) \subset {\rm Int} X$ and $k_2 c={i_{N,X}}_{\ast}({\nu}_N)$. The assumptions that $X$ is spin and that $N$ and $N_0$ are closed, connected and orientable surfaces guarantee that the normal bundle of the image is trivial together with fundamental arguments on characteristic classes of linear bundles over manifolds related to smooth embeddings of smooth manifolds. In the situation of Theorem \ref{thm:2} let $n=2$ and we do not exclude the case $(m,n,\dim X)=(3,2,5)$. We can apply Theorem \ref{thm:2} together with Theorem \ref{thm:1} for $(m,n,\dim X)=(3,2,5)$.
\end{proof}
\begin{Thm}
\label{thm:4}
Let $R:=\mathbb{Z}, \mathbb{Z}/l\mathbb{Z}$ for an integer $l >1$.
Let $X$ be a compact and spin manifold and let $c \in H_{3}(X;R)$ be realized by the fundamental class of a closed, connected and orientable manifold $N_0$ of dimension $n=3$ for an orientation of $N_0$. Let $M$ be a closed and connected manifold of dimension $m \geq n=3$. Let $N$ be a closed, connected and orientable manifold $N$ of dimension $n=3$.
If $R$ is not isomorphic to $\mathbb{Z}/2\mathbb{Z}$, then $M$, $N$ and $N_0$ are orientable and $N$ and $N_0$ are oriented so that the fundamental classes are ${\nu}_N \in H_3(N;R)$ and ${\nu}_{N_0} \in H_3(N_0;R)$, respectively. In this case, we also assume that there exists a smooth map $c_N:N \rightarrow N_0$ of mapping degree $k_3$. We also use ${\nu}_N \in H_3(N;R)$ and ${\nu}_{N_0} \in H_3(N_0;R)$ for $\mathbb{Z}/2\mathbb{Z}$ fundamental classes where $R$ is isomorphic to $\mathbb{Z}/2\mathbb{Z}$ and in this case we also assume that there exists a smooth map $c_N:N \rightarrow N_0$ of $\mathbb{Z}/2\mathbb{Z}$ mapping degree $k_3$. 
Suppose that an S-trivial standard-spherical fold map $f:M \rightarrow N$ exists. We also denote the element of $R$ by $k_3$ obtained by mapping $k_3$ into $R$ in the canonical way. Assume also the following properties.
\begin{enumerate}
\item There exists a preimage $F$ of a regular value for the map $f$. Let a class $c_F \in H_{m-3}(M;R)$ be represented as the sum of the fundamental classes of all connected components oriented canonically respecting the orientation of $N$ and an orientation of $M$ if $R$ is not isomorphic to $\mathbb{Z}/2\mathbb{Z}$. If $R$ is isomorphic to $\mathbb{Z}/2\mathbb{Z}$, then let a class $c_F \in H_{m-n}(M;R)$ be represented as the sum of the $\mathbb{Z}/2\mathbb{Z}$ fundamental classes of all connected components.
\item There exists an element $k \in R$ and a UFG $c_{F,0} \in H_{m-3}(M;R)$ satisfying $c_F=kc_{F_0}$ and $H_{m-3}(M;R)$ is the internal direct sum of the submodule generated by the one element set $\{c_{F_0}\}$ and a suitable submodule.
\item We can take an embedding $i_{N_0,X}:N_0 \rightarrow X$ satisfying $i_{N_0,X}(N_0) \subset {\rm Int} X$ and $c={i_{N_0,X}}_{\ast}({\nu}_{N_0})$, which is assumed in the beginning.
\item $\dim X \geq \max\{\frac{3m+3}{2},m+4\}$.
\end{enumerate}
In this situation, $k k_3 c$ is realized by the class $k k_3 {\rm PD}_{R}({c_{F,0}}^{\ast})$.
\end{Thm}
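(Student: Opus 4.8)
The plan is to deduce Theorem~\ref{thm:4} from Theorem~\ref{thm:2} with $n=3$, in close parallel with the proof of Theorem~\ref{thm:3} (which is the case $n=2$); the only point needing genuine work is to check that the normal bundle of a suitably chosen copy of $N$ in $X$ is trivial, now for an orientable $3$-manifold in place of a surface. Observe first that the fourth assumption, together with $m\ge n=3$, already forces $\dim X\ge m+4\ge 7$, so $\dim X\ge 2\dim N+1$ and $\dim X>5$; in particular the exceptional triple $(m,n,\dim X)=(3,2,5)$ of Theorem~\ref{thm:1} does not occur and Proposition~\ref{prop:2} by itself supplies the required lifts of the fold map to embeddings. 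Moreover, since $n=3$, the fourth assumption reads $\dim X\ge\max\{\frac{3m+3}{2},m+4\}=\max\{\frac{3m+3}{2},m+n+1\}$, which is exactly the dimension hypothesis of Proposition~\ref{prop:2} and the last hypothesis of Theorem~\ref{thm:2}.

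Next I would build the embedding of $N$ into $X$. The composite $i_{N_0,X}\circ c_N\colon N\to X$ is a smooth map with image in ${\rm Int}\,X$; since $\dim X\ge 2\dim N+1$, general position makes it homotopic, through smooth maps into ${\rm Int}\,X$, to a smooth embedding $i_{N,X}\colon N\to X$ with $i_{N,X}(N)\subset{\rm Int}\,X$. As homotopic maps induce the same homomorphism on homology, ${i_{N,X}}_{\ast}({\nu}_N)={i_{N_0,X}}_{\ast}\bigl({c_N}_{\ast}({\nu}_N)\bigr)={i_{N_0,X}}_{\ast}(k_3{\nu}_{N_0})=k_3 c$, using the defining property of the mapping degree $k_3$ and the relation $c={i_{N_0,X}}_{\ast}({\nu}_{N_0})$ assumed at the outset; the same computation with the $\mathbb{Z}/2\mathbb{Z}$ fundamental classes and the $\mathbb{Z}/2\mathbb{Z}$ mapping degree handles the case $R\cong\mathbb{Z}/2\mathbb{Z}$.

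The main step is to verify that the normal bundle $\nu$ of $i_{N,X}(N)$ in $X$ is trivial. Its rank $\dim X-3\ge 4$ exceeds $3=\dim N$, so over the $3$-complex $N$ the bundle $\nu$ is trivial as soon as it is stably trivial; and a stable bundle over a $3$-complex is trivial precisely when its first two Stiefel--Whitney classes vanish, since $B{\rm Spin}$ is $3$-connected. Now $\nu\oplus TN\cong TX|_N$, and a closed, connected, orientable $3$-manifold is parallelizable, so $TN$ is trivial and $\nu$ is stably isomorphic to $TX|_N$; as $X$ is spin, $w_1(TX|_N)=w_2(TX|_N)=0$, hence $w_1(\nu)=w_2(\nu)=0$ and $\nu$ is trivial. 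This is the $3$-dimensional counterpart of the assertion invoked in the proof of Theorem~\ref{thm:3}, where for orientable surfaces in a spin manifold evenness of the Euler characteristic plays the part that parallelizability plays here. I expect this characteristic-class check --- passing from stable triviality to honest triviality, and keeping track of the orientability versus $\mathbb{Z}/2\mathbb{Z}$ dichotomy --- to be the only place where real care is needed, the rest being formal.

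Finally, with $n=3$ all hypotheses of Theorem~\ref{thm:2} are now in force: the orientation conventions and the conditions on $f$ and on the preimage $F$ are exactly those assumed here, the embedding required there is $i_{N,X}$ with its trivial normal bundle (the class ``$c$'' of Theorem~\ref{thm:2} being $k_3 c$), and the dimension condition is the fourth assumption here. Applying Theorem~\ref{thm:2} --- with, unlike in the proof of Theorem~\ref{thm:3}, no exceptional triple to absorb through Theorem~\ref{thm:1} --- then yields that $k k_3 c$ is realized by $k k_3\,{\rm PD}_R({c_{F,0}}^{\ast})$, which is the assertion.
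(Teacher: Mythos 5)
Your proposal is correct and follows essentially the same route as the paper's own proof: perturb $i_{N_0,X}\circ c_N$ to an embedding of $N$ (possible since $\dim X\ge 7=2\dim N+1$), verify triviality of the normal bundle from the spin hypothesis on $X$ and the parallelizability of the orientable $3$-manifold $N$, and then invoke Theorem~\ref{thm:2} with $n=3$. The only difference is one of detail: where the paper appeals to ``fundamental arguments on characteristic classes,'' you spell out the stable-range and $B{\rm Spin}$-connectivity argument explicitly, which is a faithful expansion rather than a different method.
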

\begin{proof}
First from the assumption and other assumptions on the dimensions of the manifolds, $\dim X \geq 7=2 \times3+1$ follows. The existence of $c_N:N \rightarrow N_0$ and $i_{N_0,X}:N_0 \rightarrow X$ in the third property yields the existence of a smooth embedding $i_{N,X}:N \rightarrow X$ of the $3$-dimensional manifold $N$ satisfying
 $i_{N,X}(N) \subset {\rm Int} X$ and $k_3 c={i_{N,X}}_{\ast}({\nu}_N)$. The assumptions that $X$ is spin, that $N$ and $N_0$ are $3$-dimensional, closed, connected and orientable and that as a result $N$ and $N_0$ are spin guarantee that the normal bundle of the image is trivial together with fundamental arguments on characteristic classes of linear bundles over manifolds related to smooth embeddings of smooth manifolds. 

In the situation of Theorem \ref{thm:2} let $n=3$.
We can apply Theorem \ref{thm:2} to complete the proof.
\end{proof}

Last we present important facts in constructing explicit cases explaining these theorems well.
\begin{enumerate}
\item For two closed, connected, oriented and equidimensional manifolds $X$ and $Y$ where $Y$ is a standard sphere of dimension larger than $0$ and an arbitrary integer $k$, there exists a smooth map $c:X \rightarrow Y$ whose mapping degree is $k$. This is important as an example for $c_N$ in Theorems \ref{thm:3} and \ref{thm:4}.
\item For two closed, connected and equidimensional manifolds $X$ and $Y$ where $Y$ is a standard sphere of dimension larger than $0$ and an arbitrary element $k \in \mathbb{Z}/2\mathbb{Z}$, there exists a smooth map $c:X \rightarrow Y$ whose $\mathbb{Z}/2\mathbb{Z}$ mapping degree is $k$. This is important as an example for $c_N$ in Theorems \ref{thm:3} and \ref{thm:4}.
\item The projections of smooth trivial bundles over closed and connected manifolds whose fibers are closed and connected manifolds present infinitely many examples for maps $f$ in Theorems \ref{thm:2}, \ref{thm:3} and \ref{thm:4}.
\item The projections of linear bundles over standard spheres which may not be trivial and whose fibers are unit spheres present infinitely many examples for maps $f$ in Theorems \ref{thm:2}, \ref{thm:3} and \ref{thm:4} for a suitable $R=\mathbb{Z}/l\mathbb{Z}$.
\end{enumerate}
In addition, Example \ref{ex:2} is also important.
\section{Acknowledgement}
\thanks{The author is a member of the project Grant-in-Aid for Scientific Research (S) (JP17H06128 Principal Investigator: Osamu Saeki)
"Innovative research of geometric topology and singularities of differentiable mappings"\\
(https://kaken.nii.ac.jp/en/grant/KAKENHI-PROJECT-17H06128/)\\ and supported by this project.}


\begin{thebibliography}{30}
\bibitem{bohrhankekotschick} C. Bohr, B. Hanke and D. Kotschick, \textsl{Cycles, submanifolds, and structures on normal bundles}, manuscripta mathematica 108 (2002), 483--494, arXiv:math/0011178.
\bibitem{budney} R. Budney, \textsl{A family of embeddings spaces}, Geometry and Topology Monographs 13 (2008), 41--83. 
\bibitem{costantinothurston} F. Costantino and D. Thurston, \textsl{$3$-manifolds efficiently bound $4$-manifolds}, J. Topol. 1 (2008),
703--745.
\bibitem{ehresmann} C. Ehresmann, \textsl{Les connexions infinitesimales dans un espace fibre differentiable}, Colloque de Topologie, Bruxelles (1950), 29--55.

\bibitem{eliashberg} Y. Eliashberg, \textsl{On singularities of folding type}, Math. USSR Izv. 4 (1970). 1119--1134.
\bibitem{golubitskyguillemin} M. Golubitsky and V. Guillemin, 
\textsl{Stable mappings and their singularities}, Graduate Texts in Mathematics (14), Springer-Verlag (1974).
\bibitem{ishikawakoda} M. Ishikawa and Y. Koda, \textsl{Stable maps and branched shadows of $3$-manifolds}, Mathematische Annalen 367 (2017), no. 3, 1819--1863, arXiv:1403.0596.
\bibitem{kitazawa} N. Kitazawa, \textsl{On round fold maps} (in Japanese), RIMS K\^{o}ky\^{u}roku Bessatsu B38 (2013), 45--59.
\bibitem{kitazawa2} N. Kitazawa, \textsl{On manifolds admitting fold maps with singular value sets of concentric spheres}, Doctoral Dissertation, Tokyo Institute of Technology (2014).
\bibitem{kitazawa3} N. Kitazawa, \textsl{Fold maps with singular value sets of concentric spheres}, Hokkaido Mathematical Journal Vol.43, No.3 (2014), 327--359.
\bibitem{kitazawa4} N. Kitazawa, \textsl{Round fold maps and the topologies and the differentiable structures of manifolds admitting explicit ones}, submitted to a refereed journal, arXiv:1304.0618 (the title has changed).
\bibitem{kitazawa5} N. Kitazawa, \textsl{Constructing fold maps by surgery operations and homological information of their Reeb spaces}, submitted to a refereed journal, arxiv:1508.05630 (the title has been changed).
\bibitem{kitazawa6} N. Kitazawa, \textsl{Lifts of spherical Morse functions}, submitted to a refereed journal, arxiv:1805.05852.
\bibitem{kitazawa7} N. Kitazawa, \textsl{Notes on fold maps obtained by surgery operations and algebraic information of their Reeb spaces}, submitted to a refereed journal, arxiv:1811.04080.
\bibitem{kronheimer} P. B. Kronheimer, \textsl{Embedded surfaces and gauge theory in three
 and four dimensions} (www.math.harvard.edu/\~{}kronheim/jdg96.pdf), Harvard University, Cambridge MA 02138.
\bibitem{nishioka} M. Nishioka, \textsl{Desingularizing special generic maps into $3$-dimensional space}, PhD Thesis (Kyushu Univ.), arxiv:1603.04520.
\bibitem{reeb} G. Reeb, \textsl{Sur les points singuliers d\`{u}ne forme de Pfaff completement integrable ou d’une fonction numerique}, -C. R. A. S. Paris 222 (1946), 847--849. 
\bibitem{saeki} O. Saeki, \textsl{Notes on the topology of folds}, J. Math. Soc. Japan Volume 44, Number 3 (1992), 551--566. 
\bibitem{saeki2} O. Saeki, \textsl{Topology of special generic maps of manifolds into Euclidean spaces}, Topology Appl. 49 (1993), 265--293.
\bibitem{saeki3} O. Saeki, \textsl{Simple stable maps of 3-manifolds into surfaces}, Topology 35 (1996), 671--698.
\bibitem{saekisuzuoka} O. Saeki and K. Suzuoka, \textsl{Generic smooth maps with sphere fibers} J. Math. Soc. Japan Volume 57, Number 3 (2005), 881--902.
\bibitem{saekitakase} O. Saeki and M. Takase, \textsl{Desingularizing special generic maps}, Journal of Gokova Geometry Topology 7 (2013), 1--24.
\bibitem{shiota} M. Shiota, \textsl{Thom's conjecture on triangulations of maps}, Topology 39 (2000), 383--399.
\bibitem{suzuki} H. Suzuki, \textsl{On the realization of homology classes by submanifolds}, Trans. Amer. Math. Soc. 87 (1958), 541--550.
\bibitem{thom} R. Thom, \textsl{Quelques propri\'et\'es globales des vari\'et\'es diff\'erentiables}, Commentarii
 mathematici Helvetici (1954), Volume 28, 17--86.
\bibitem{turaev} Vladimir G. Turaev, \textsl{Topology of shadows}, Preprint, 1991.
\bibitem{turaev2} Vladimir G. Turaev, \textsl{Shadow links and face models of statistical mechanics}, J. Differential Geom. 36 (1992), 35--74.
\end{thebibliography}
\end{document}